\newtheorem{theorem}{Theorem}[section]
\newtheorem{definition}{Definition}[section]
\newtheorem{example}{Example}[section]
\begin{document}
\title%
{On the Inverse of the sum of two sectorial operators}
\author{Nikolaos Roidos}
\address{Institut für Analysis, Leibniz Universität Hannover, Welfengarten 1, 
30167 Hannover, Germany}
\email{roidos@math.uni-hannover.de}

\begin{abstract}
We study an abstract linear operator equation on a Banach space by using the inverse of the sum of two sectorial operators. We prove that the boundedness of a special type of operator valued $H^\infty$-calculus is sufficient for maximal regularity of the solution. We apply the result to the abstract parabolic problem, to give a maximal $L^{p}$-regularity condition. We also study the abstract hyperbolic problem and give a sufficient condition for the existence of solution. 
\end{abstract}

\date{\today}

\maketitle

\section{introduction}

We consider an abstract linear operator equation on a Banach space $E$ of the form
\begin{gather}\label{eb}
(A+B)x=y,
\end{gather}
where $A$, $B$ are closed linear (resolvent) commuting operators. The importance of the above equation is that instead of considering the equation on $E$ we can consider it on the $E$-valued $L^{p}$ space $L^{p}(0,T;E)$, for some $T>0$ and $p\in(1,\infty)$, and take $B$ to be the first or the second derivative with respect to $t\in(0,T)$, with appropriate boundary conditions, to obtain the abstract parabolic and hyperbolic problem respectively (i.e. the first and second order abstract Cauchy problem). In such problems we are interested in sufficient conditions for the existence of a solution and also the regularity of solution. 
\par
Da Prato and Grisvard in \cite{PG} showed that the sectoriality property for the operators $A$ and $B$ (i.e. the good asymptotic behavior of the resolvents in some sectors) is sufficient for the existence of a unique solution of (\ref{eb}) for any $y\in E$, which also depends continuously on $y$. From the analysis there, it follows that the closedness of the sum of the two closed operators $A$ and $B$ is connected to the \textit{maximal regularity} of the solution, i.e. to the fact that the solution belongs to the intersection of the domains $\mathcal{D}(A)\cap\mathcal{D}(B)$. The last property plays an important role in the approach of the nonlinear problems (see  Clément and Li, \cite{CL}). Dore and Venni in \cite{DV} studied the problem (\ref{eb}) in the case of a UMD space, for sectorial operators having bounded imaginary powers, and gave a sufficient condition for the closedness of the sum and hence for the maximal regularity of the solution. An application to the first derivative was also given. Kalton and Weis in \cite{KW}, by using the connection between the boundedness of the joint functional calculus and the closedness of the sum for two sectorial operators (see Proposition 2.7 in \cite{LA}), gave another answer to the maximal regularity question, by requiring bounded $H^{\infty}$-calculus to one of the operators and Rademacher boundedness to the other. Finally, Neidhardt and Zagrebnov in \cite{NZ} treated the abstract parabolic problem in a more general case (non autonomous evolution equation) with a different approach, i.e. by extending a certain evolution operator to an anti-genarator of an evolution semigroup, and provided existence together with regularity results.   
\par
In section 2 of this paper we study the problem (\ref{eb}) in a classical sense, by using a formula for the inverse of the sum of two operators. The formula we use is the same as in \cite{PG}. Nevertheless, we apply a different approach by means of complex powers of the operators. Namely, we regard the inverse of the sum of the operators restricted to the images of complex powers with negative real part. In this way, we prove the same results as in \cite{PG} concerning the interpolation spaces. Moreover, in a similar approach, by considering the inverse of the sum on the image of the bounded holomorphic semigroup generated by at least one of the two operators, we see (Theorem \ref{t1}) that we can impose a similar condition for the closedness of the sum to that one of the bounded operator valued $H^{\infty}$-calculus. Hence, we find that a sufficient condition for the sum to be closed is that one of the two operators, which has to be a generator of a bounded holomorphic semigroup, has to admit a special type of bounded $H^{\infty}$-calculus for operator-valued holomorphic functions of exponential decay. In the third section, we apply the abstract result to the first derivative. In section four we study the abstract hyperbolic problem. Since the second derivative is not a sectorial operator, we treat the problem by defining the inverse of the sum in a special way. We show that a sufficient condition for the existence of a solution is that in addition to the classical sectoriality property, the resolvent of the operator has to satisfy some decay condition on the right hand side of a parabola (see Definition \ref{dp}).

\section{The inverse of A+B}

\begin{definition}(Sectorial operators)
Let $E$ be a Banach space, $K\geq1$ and $\theta\in[0,\pi)$. Let $\mathcal{P}_{K}(\theta)$ be the class of closed linear operators in $E$ such that if $A\in\mathcal{P}_{K}(\theta)$, then 
\[
S_{\theta}=\{z\in\mathbb{C}\,|\, |\arg z|\leq\theta\}\cup\{0\}\subset\rho{(-A)} \,\,\,\,\,\, \mbox{and} \,\,\,\,\,\, (1+|z|)\|(A+z)^{-1}\|\leq K, \,\,\,\,\,\, \forall z\in S_{\theta}.
\]
Also, let $\mathcal{P}(\theta)=\cup_{K}\mathcal{P}_{K}(\theta)$.
\end{definition}
If $A\in \mathcal{P}(\theta)$, then by a sectoriality extension argument (see the Appendix) we can always assume that $\theta>0$.
\begin{definition}
For any $\rho\geq0$ and $\theta\in(0,\pi)$, let $\Gamma_{\rho,\theta}$ be the positively oriented path 
\[
\{\rho e^{i\phi}\in\mathbb{C}\,|\,\theta\leq\phi\leq2\pi-\theta\}\cup\{re^{\pm i\theta}\in\mathbb{C}\,|\,r\geq\rho\}.
\]
If $\rho=0$, we denote $\Gamma_{\rho,\theta}$ by $\Gamma_{\theta}$.
\end{definition}
We define next a special type of bounded $H^{\infty}$-calculus, for holomprphic operator valued families which decay exponentially in the complement of the sector of a sectorial operator.
\begin{definition}
Let $E$ be a Banach space and $A\in \mathcal{P}(\theta)$, $\theta>\pi/2$. Let $H^{e,\infty}_{\mathcal{L}(E)}(\theta)$ be the space of all bounded holomorphic functions $f:\mathbb{C}\setminus S_{\theta}\rightarrow \mathcal{L}(E)$ such that $f(\lambda)$ and $(A+z)^{-1}$ commute for all $\lambda\in \mathbb{C}\setminus S_{\theta}$ and $z\in S_{\theta}$, and 
\[
\|f(\lambda)\|_{\mathcal{L}(E)}\leq c\frac{|\lambda|}{1+|\lambda|}e^{-\delta|\lambda|}, \,\,\, \mbox{for any} \,\,\, \lambda\in \mathbb{C}\setminus S_{\theta}, 
\]
and some $c$, $\delta>0$ depending on $f$. Any $f\in H^{e,\infty}_{\mathcal{L}(E)}(\theta)$ can be extended to non-tangential values in $\partial S_{\theta}$, and defines an element in $\mathcal{L}(E)$ by 
\[
f(-A)=\frac{1}{2\pi i}\int_{\Gamma_{\theta}}f(\lambda)(A+\lambda)^{-1} d\lambda.
\]
We say that $A$ admits a bounded $H^{e,\infty}_{\mathcal{L}(E)}(\theta)$-calculus if $\|f(-A)\|_{\mathcal{L}(E)}\leq C_{A}\|f\|_{\infty}$ for any $f\in H^{e,\infty}_{\mathcal{L}(E)}(\theta)$, where $C_{A}$ is independent of $f$ and $\|f\|_{\infty}$ is the supremum norm of $\|f(\lambda)\|_{\mathcal{L}(E)}$.
\end{definition}
Since we regard only commuting operators, we recall the following definition.
\begin{definition}
Two closed linear operators $A$, $B$ in a Banach space $E$ are \textsl{resolvent commuting} if there exist some $\lambda\in\rho(-A)$ and $\mu\in\rho(-B)$ such that 
\[
[(A+\lambda)^{-1},(B+\mu)^{-1}]=0.
\]
\end{definition}
At the following we will use (without mention it) Lemma III.4.9.1 in \cite{Am}. The first part of next theorem is contained in the results of \cite{PG}. 

\begin{theorem}\label{t1}
Let $E$ be a Banach space, $A\in\mathcal{P}(\theta_{A})$ and $B\in\mathcal{P}(\theta_{B})$ be resolvent commuting with $\theta_{A}>\theta_{B}$ and $\theta_{A}+\theta_{B}>\pi$. Then, $A+B$ with $\mathcal{D}(A+B)=\mathcal{D}(A)\cap\mathcal{D}(B)$ is closable and the following equation
\[
\overline{{(A+B)}}x=y,
\] 
for any $y\in E$, has a unique solution $x\in\bigcap_{\theta<1}(E,\mathcal{D}(A))_{\theta,q}\cap(E,\mathcal{D}(B))_{\theta,q}$, for any $q\in[1,\infty)$, given by 
\[
x=\frac{1}{2\pi i}\int_{\Gamma_{\theta_{B}}}(A-z)^{-1}(B+z)^{-1}ydz.
\]
If  
\[
y\in\bigcup_{\theta>0}(E,\mathcal{D}(A))_{\theta,p}\cup(E,\mathcal{D}(B))_{\theta,p},
\]
for some $p\in[1,\infty)$, then $x\in\mathrm{D}(A)\cap\mathrm{D}(B)$.\\
Moreover, if $A$ admits a bounded $H^{e,\infty}_{\mathcal{L}(E)}(\theta_{A})$-calculus, then $x\in\mathrm{D}(A)\cap\mathrm{D}(B)$ for any $y\in E$ and
\[
\overline{{(A+B)}}=A+B.
\]
\end{theorem}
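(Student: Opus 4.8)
For the first two assertions I would follow Da Prato and Grisvard \cite{PG}. Because $\theta_A>\theta_B$ and $\theta_A+\theta_B>\pi$, the resolvent estimates place $\sigma(A)$ in the open sector $S_{\pi-\theta_A}$ and $\sigma(-B)$ in the complement of $S_{\theta_B}$, so every path $\Gamma_{\theta'}$ with $\pi-\theta_A<\theta'\leq\theta_B$ separates them, and since the integrand is $O(|z|^{-2})$ at infinity and bounded near $0$ (note $0\in\rho(-A)\cap\rho(-B)$), Cauchy's theorem shows that all these paths give the same bounded operator $S:=\frac{1}{2\pi i}\int_{\Gamma_{\theta_B}}(A-z)^{-1}(B+z)^{-1}\,dz$. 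The resolvent identity then yields $\overline{(A+B)}\,Sy=y$ together with injectivity of $\overline{(A+B)}$, so $S=\overline{(A+B)}^{-1}$, $\mathcal{D}(\overline{(A+B)})=\operatorname{ran}S$; the interpolation regularity of $x=Sy$, and the improvement $x\in\mathcal{D}(A)\cap\mathcal{D}(B)$ when $y$ lies in one of the listed interpolation spaces, come exactly as in \cite{PG} from the extra decay of $(A-z)^{-1}y$, resp.\ $(B+z)^{-1}y$, on the path. In particular $Sy\in\mathcal{D}(A)\cap\mathcal{D}(B)$ for $y$ in the dense subspace $\mathcal{D}(A)\cap\mathcal{D}(B)$ of $E$.

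For the last assertion the aim is to upgrade this to: $S$ maps $E$ into $\mathcal{D}(A)\cap\mathcal{D}(B)$ with $AS,BS\in\mathcal{L}(E)$; then $\overline{(A+B)}=A+B$ follows at once, since $\mathcal{D}(\overline{(A+B)})=\operatorname{ran}S\subseteq\mathcal{D}(A)\cap\mathcal{D}(B)=\mathcal{D}(A+B)$. Here $\theta_A>\pi/2$, so $-A$ generates a bounded holomorphic semigroup $e^{-sA}$ and $0\in\rho(-A)$. Fix $\theta'$ with $\pi-\theta_A<\theta'<\min\{\theta_B,\pi/2\}$, so that $S$ may also be written over $\Gamma_{\theta'}$ and $\operatorname{Re}z\geq|z|\cos\theta'>0$ on $\Gamma_{\theta'}$, and for $s>0$ introduce the regularization
\[
R_s y:=\frac{1}{2\pi i}\int_{\Gamma_{\theta'}}e^{-sz}\,z\,(A-z)^{-1}(B+z)^{-1}y\,dz ,
\]
which is absolutely convergent because of the factor $e^{-sz}$. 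The decisive observation is that $R_s$ belongs (up to sign) to the $H^{e,\infty}_{\mathcal{L}(E)}(\theta_A)$-calculus: pushing $\Gamma_{\theta'}$ in to $\partial S_{\pi-\theta_A}$ (the integrand is holomorphic in between, $e^{-sz}$ annihilates the arc at infinity, and the factor $z$ annihilates the arc at the origin) and substituting $z=-\lambda$ gives $R_s=-f_s(-A)$ with $f_s(\lambda):=e^{s\lambda}\,\lambda\,(B-\lambda)^{-1}$. For $\lambda\in\mathbb{C}\setminus S_{\theta_A}$ one has $-\lambda\in S_{\theta_B}$ (this uses $\theta_A+\theta_B>\pi$), so $f_s$ is holomorphic, commutes with $(A+z)^{-1}$, and, since $\cos(\arg\lambda)\leq\cos\theta_A<0$ there, $\|f_s(\lambda)\|_{\mathcal{L}(E)}\leq K_B\tfrac{|\lambda|}{1+|\lambda|}e^{-s|\cos\theta_A|\,|\lambda|}$; thus $f_s\in H^{e,\infty}_{\mathcal{L}(E)}(\theta_A)$ with $\|f_s\|_\infty\leq K_B$, and boundedness of the calculus gives the uniform bound $\sup_{s>0}\|R_s\|_{\mathcal{L}(E)}\leq C_AK_B$.

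Next I would compute $\lim_{s\to0^+}R_sy$ on the dense set $\mathcal{D}(A)\cap\mathcal{D}(B)$. Writing $z(B+z)^{-1}y=y-(B+z)^{-1}By$ splits $R_sy$ into $\frac{1}{2\pi i}\int_{\Gamma_{\theta'}}e^{-sz}(A-z)^{-1}y\,dz$ and $-\frac{1}{2\pi i}\int_{\Gamma_{\theta'}}e^{-sz}(A-z)^{-1}(B+z)^{-1}(By)\,dz$. The second piece tends by dominated convergence to $\frac{1}{2\pi i}\int_{\Gamma_{\theta'}}(A-z)^{-1}(B+z)^{-1}(By)\,dz=S(By)=BSy$. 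The first piece equals $e^{-sA}y$ — it is just the Cauchy representation of the holomorphic semigroup, valid since $\Gamma_{\theta'}$ surrounds $\sigma(A)\subset S_{\pi-\theta_A}$ and $e^{-sz}$ decays on the path — and $e^{-sA}y\to y$ as $s\to0^+$ because $y\in\mathcal{D}(A)$. Hence $R_sy\to y-BSy$ on $\mathcal{D}(A)\cap\mathcal{D}(B)$; combined with $\sup_s\|R_s\|\leq C_AK_B$, a routine density argument makes $R_sy$ converge for every $y\in E$, defining $R\in\mathcal{L}(E)$ with $Ry=y-BSy$ on that dense set. Closedness of $B$ then upgrades this: for arbitrary $y\in E$, $Sy_n\to Sy$ and $BSy_n=(I-R)y_n\to(I-R)y$ along $y_n\in\mathcal{D}(A)\cap\mathcal{D}(B)$ with $y_n\to y$, so $Sy\in\mathcal{D}(B)$ and $BS=I-R\in\mathcal{L}(E)$. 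To get $Sy\in\mathcal{D}(A)$, set $u_n:=n(A+n)^{-1}Sy$; these lie in $\mathcal{D}(A)\cap\mathcal{D}(B)$ (as $(A+n)^{-1}$ commutes with $B$ and $Sy\in\mathcal{D}(B)$), $u_n\to Sy$, $Bu_n=n(A+n)^{-1}BSy\to BSy$, and $(A+B)u_n=n(A+n)^{-1}\overline{(A+B)}Sy=n(A+n)^{-1}y\to y$, whence $Au_n\to y-BSy$; closedness of $A$ gives $Sy\in\mathcal{D}(A)$ with $ASy=y-BSy$. Therefore $\operatorname{ran}S\subseteq\mathcal{D}(A)\cap\mathcal{D}(B)$ and $\overline{(A+B)}=A+B$.

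The step I expect to be hardest is the limit $s\to0^+$: the naive formula $ASy=\frac{1}{2\pi i}\int_{\Gamma_{\theta'}}z(A-z)^{-1}(B+z)^{-1}y\,dz$ diverges absolutely, so convergence of $R_sy$ for \emph{all} $y$ is invisible in the integrand and must be extracted from the uniform $H^{e,\infty}$-estimate together with the explicit limit on $\mathcal{D}(A)\cap\mathcal{D}(B)$; a more clerical but genuinely delicate point is keeping track of the sectors and orientations in the contour deformation and the substitution $z=-\lambda$ so that $R_s$ lands precisely on the calculus. (As is customary in this circle of ideas one also tacitly uses that $A$ and $B$ are densely defined, or else argues in $\overline{\mathcal{D}(A)}$.)
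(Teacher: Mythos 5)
Your proposal is correct and follows essentially the paper's own route: your regularization $R_s$ is exactly the paper's operator $A\mathcal{K}e^{-wA}$, namely the $H^{e,\infty}_{\mathcal{L}(E)}(\theta_A)$-calculus applied to $\lambda\mapsto \lambda e^{s\lambda}(B-\lambda)^{-1}$, whose uniformly bounded sup-norm ($\leq K_B$) gives $\sup_{s>0}\|R_s\|\leq C_AK_B$, after which a limit on a dense subspace together with closedness yields $\mathrm{ran}\,\mathcal{K}\subset\mathcal{D}(A)\cap\mathcal{D}(B)$ and $\overline{(A+B)}=A+B$, just as in the paper (which restricts to $y\in\mathcal{D}(A)$ and uses $A\mathcal{K}e^{-wA}y=e^{-wA}A\mathcal{K}y$). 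The remaining differences are cosmetic: you identify $R_s$ with the calculus by contour deformation instead of the paper's Fubini/resolvent-identity computation of $\mathcal{K}e^{-wA}$, you obtain $\mathcal{D}(B)$-membership first and then $\mathcal{D}(A)$ by an $n(A+n)^{-1}$-approximation rather than invoking the identity $A^{-1}B^{-1}w=(A^{-1}+B^{-1})\mathcal{K}w$, and you defer the first two assertions to Da Prato--Grisvard \cite{PG}, which the paper itself credits; the sign/orientation and density caveats you flag are shared by the paper.
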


\begin{proof}
Let the bounded in $E$ operator
\begin{gather}\label{K}
\mathcal{K}=\frac{1}{2\pi i}\int_{\Gamma_{\theta_{B}}}(A-z)^{-1}(B+z)^{-1}dz.
\end{gather}
By a sectoriality extension argument and Cauchy's theorem, in the above integral formula we can replace the path $\Gamma_{\theta_{B}}$ by $c+\Gamma_{\theta_{B}-\varepsilon}$ for some $c\in\mathbb{R}$ and $\varepsilon>0$ sufficiently closed to zero.
\par
Let $y\in(E,\mathcal{D}(A))_{\theta',p}$ for some $0<\theta'<1$ and $p\in[1,\infty)$. Then, by I.2.5.2 and I.2.9.6 in \cite{Am}, $y\in\mathcal{D}(A^{\theta})$ for any $0<\theta<\theta'$. If we take $c<0$, then by the standard way of defining fractional powers for sectorial operators (see Theorem III.4.6.5 in \cite{Am}), for sufficiently small $\rho>0$ and the path $\Gamma_{\rho,\theta_{A}}$, we have that
\begin{eqnarray*}
\lefteqn{\mathcal{K}y=\frac{1}{2\pi i}\int_{c+\Gamma_{\theta_{B}}}(A-z)^{-1}(B+z)^{-1}A^{-\theta}A^{\theta}ydz}\\
&=&\frac{1}{2\pi i}\int_{c+\Gamma_{\theta_{B}}}(A-z)^{-1}(B+z)^{-1}\big(\frac{1}{2\pi i}\int_{\Gamma_{\rho,\theta_{A}}}(-\lambda)^{-\theta}(A+\lambda)^{-1}d\lambda\big)A^{\theta}ydz\\
&=&(\frac{1}{2\pi i})^{2}\int_{c+\Gamma_{\theta_{B}}}\int_{\Gamma_{\rho,\theta_{A}}}(B+z)^{-1}(-\lambda)^{-\theta}(A-z)^{-1}(A+\lambda)^{-1}A^{\theta}yd\lambda dz\\
&=&(\frac{1}{2\pi i})^{2}\int_{c+\Gamma_{\theta_{B}}}\int_{\Gamma_{\rho,\theta_{A}}}(B+z)^{-1}(-\lambda)^{-\theta}(z+\lambda)^{-1}\big((A-z)^{-1}-(A+\lambda)^{-1}\big)A^{\theta}yd\lambda dz\\
&=&(\frac{1}{2\pi i})^{2}\int_{c+\Gamma_{\theta_{B}}}\int_{\Gamma_{\rho,\theta_{A}}}(A-z)^{-1}(B+z)^{-1}(-\lambda)^{-\theta}(z+\lambda)^{-1}A^{\theta}yd\lambda dz\\
&&-(\frac{1}{2\pi i})^{2}\int_{\Gamma_{\rho,\theta_{A}}}\int_{c+\Gamma_{\theta_{B}}}(A+\lambda)^{-1}(B+z)^{-1}(-\lambda)^{-\theta}(z+\lambda)^{-1}A^{\theta}ydzd\lambda, 
\end{eqnarray*}
where at the last step we have used Fubini's theorem. By Cauchy's theorem, the first term in the right hand side of the above equation is zero, and hence
\begin{gather}\label{e111}
\mathcal{K}y=\frac{1}{2\pi i}\int_{-\Gamma_{\rho,\theta_{A}}}(A-\lambda)^{-1}(B+\lambda)^{-1}\lambda^{-\theta}A^{\theta}yd\lambda.
\end{gather}
Since for any sectorial operator $T$, $T(T+z)^{-1}=I-z(T+z)^{-1}$ is uniformly bounded in $z$ inside the area of the sectoriality, the integrals 
\begin{gather*}
\int_{-\Gamma_{\rho,\theta_{A}}}A(A-\lambda)^{-1}(B+\lambda)^{-1}\lambda^{-\theta}A^{\theta}yd\lambda \,\,\, \mbox{and} \,\,\, \int_{-\Gamma_{\rho,\theta_{A}}}(A-\lambda)^{-1}B(B+\lambda)^{-1}\lambda^{-\theta}A^{\theta}yd\lambda
\end{gather*}
converge absolutely. Hence, by (\ref{e111}), $\mathcal{K}y\in\mathcal{D}(A)\cap\mathcal{D}(B)$ and 
\begin{gather*}
A\mathcal{K}y=\frac{1}{2\pi i}\int_{-\Gamma_{\rho,\theta_{A}}}A(A-\lambda)^{-1}(B+\lambda)^{-1}\lambda^{-\theta}A^{\theta}yd\lambda,\\ B\mathcal{K}y=\frac{1}{2\pi i}\int_{-\Gamma_{\rho,\theta_{A}}}(A-\lambda)^{-1}B(B+\lambda)^{-1}\lambda^{-\theta}A^{\theta}yd\lambda.
\end{gather*}
Thus, 
\begin{eqnarray*}
\lefteqn{(A+B)\mathcal{K}y=\frac{1}{2\pi i}\int_{-\Gamma_{\rho,\theta_{A}}}(A+B)(A-\lambda)^{-1}(B+\lambda)^{-1}\lambda^{-\theta}A^{\theta}yd\lambda}\\
&=&\frac{1}{2\pi i}\int_{-\Gamma_{\rho,\theta_{A}}}(A-\lambda+B+\lambda)(A-\lambda)^{-1}(B+\lambda)^{-1}\lambda^{-\theta}A^{\theta}yd\lambda\\
&=&\frac{1}{2\pi i}\int_{-\Gamma_{\rho,\theta_{A}}}(B+\lambda)^{-1}\lambda^{-\theta}A^{\theta}yd\lambda+\frac{1}{2\pi i}\int_{-\Gamma_{\rho,\theta_{A}}}(A-\lambda)^{-1}\lambda^{-\theta}A^{\theta}yd\lambda.
\end{eqnarray*}
The first term in the right hand site on the above equation is zero by Cauchy's theorem. Hence, by the definition of the complex powers for $A$, we find that
\begin{gather*}
(A+B)\mathcal{K}y=A^{-\theta}A^{\theta}y=yI.
\end{gather*}

\par
Let now that $y\in(E,\mathcal{D}(B))_{\theta',p}$. Then, similarly to the previous case, $y\in\mathcal{D}(B^{\theta})$. If we take $c>0$, we have that 
\begin{eqnarray*}
\lefteqn{\mathcal{K}y=\frac{1}{2\pi i}\int_{c+\Gamma_{\theta_{B}-\varepsilon}}(A-z)^{-1}(B+z)^{-1}B^{-\theta}B^{\theta}ydz}\\
&=&\frac{1}{2\pi i}\int_{c+\Gamma_{\theta_{B}-\varepsilon}}(A-z)^{-1}(B+z)^{-1}\big(\frac{1}{2\pi i}\int_{\Gamma_{\rho,\theta_{B}}}(-\lambda)^{-\theta}(B+\lambda)^{-1}d\lambda\big)B^{\theta}ydz\\
&=&(\frac{1}{2\pi i})^{2}\int_{c+\Gamma_{\theta_{B}-\varepsilon}}\int_{\Gamma_{\rho,\theta_{B}}}(A-z)^{-1}(-\lambda)^{-\theta}(B+z)^{-1}(B+\lambda)^{-1}B^{\theta}yd\lambda dz\\
&=&(\frac{1}{2\pi i})^{2}\int_{c+\Gamma_{\theta_{B}-\varepsilon}}\int_{\Gamma_{\rho,\theta_{B}}}(A-z)^{-1}(-\lambda)^{-\theta}(\lambda-z)^{-1}\big((B+z)^{-1}-(B+\lambda)^{-1}\big)B^{\theta}yd\lambda dz\\
&=&(\frac{1}{2\pi i})^{2}\int_{c+\Gamma_{\theta_{B}-\varepsilon}}\int_{\Gamma_{\rho,\theta_{B}}}(A-z)^{-1}(B+z)^{-1}(-\lambda)^{-\theta}(\lambda-z)^{-1}B^{\theta}yd\lambda dz\\
&&-(\frac{1}{2\pi i})^{2}\int_{\Gamma_{\rho,\theta_{B}}}\int_{c+\Gamma_{\theta_{B}-\varepsilon}}(A-z)^{-1}(B+\lambda)^{-1}(-\lambda)^{-\theta}(\lambda-z)^{-1}B^{\theta}ydzd\lambda, 
\end{eqnarray*}
where at the last step we have used again Fubini's theorem. By Cauchy's theorem we find that
\begin{gather}\label{e2}
\mathcal{K}y=
\frac{1}{2\pi i}\int_{\Gamma_{\rho,\theta_{B}}}(A-\lambda)^{-1}(B+\lambda)^{-1}(-\lambda)^{-\theta}B^{\theta}yd\lambda. 
\end{gather}
Since both integrals 
\begin{gather*}
\int_{\Gamma_{\rho,\theta_{B}}}A(A-\lambda)^{-1}(B+\lambda)^{-1}(-\lambda)^{-\theta}B^{\theta}yd\lambda, \,\,\, \int_{\Gamma_{\rho,\theta_{B}}}(A-\lambda)^{-1}B(B+\lambda)^{-1}(-\lambda)^{-\theta}B^{\theta}yd\lambda
\end{gather*}
converge absolutely, we have that $\mathcal{K}y\in\mathcal{D}(A)\cap\mathcal{D}(B)$ and 
\begin{gather*}
A\mathcal{K}y=
\frac{1}{2\pi i}\int_{\Gamma_{\rho,\theta_{B}}}A(A-\lambda)^{-1}(B+\lambda)^{-1}(-\lambda)^{-\theta}B^{\theta}yd\lambda,\\
B\mathcal{K}y=
\frac{1}{2\pi i}\int_{\Gamma_{\rho,\theta_{B}}}(A-\lambda)^{-1}B(B+\lambda)^{-1}(-\lambda)^{-\theta}B^{\theta}yd\lambda.  
\end{gather*}
Hence,
\begin{eqnarray*}
\lefteqn{(A+B)\mathcal{K}y=
\frac{1}{2\pi i}\int_{\Gamma_{\rho,\theta_{B}}}(A+B)(A-\lambda)^{-1}(B+\lambda)^{-1}(-\lambda)^{-\theta}B^{\theta}yd\lambda}\\ 
&=&\frac{1}{2\pi i}\int_{\Gamma_{\rho,\theta_{B}}}(A-\lambda+B+\lambda)(A-\lambda)^{-1}(B+\lambda)^{-1}(-\lambda)^{-\theta}B^{\theta}yd\lambda\\
&=&\frac{1}{2\pi i}\int_{\Gamma_{\rho,\theta_{B}}}(B+\lambda)^{-1}(-\lambda)^{-\theta}B^{\theta}yd\lambda+\frac{1}{2\pi i}\int_{\Gamma_{\rho,\theta_{B}}}(A-\lambda)^{-1}(-\lambda)^{-\theta}B^{\theta}yd\lambda.
\end{eqnarray*}
The last term in the above equation is zero by Cauchy's theorem, and by the definition of the complex powers for $B$, we find that
\begin{gather*}
(A+B)\mathcal{K}y=B^{-\theta}B^{\theta}y=yI.
\end{gather*}

\par 
Now take $\phi\in(0,1)$ and any $y\in E$. Then, by (\ref{e111}), there is
\begin{gather*}
A^{\phi-1}\mathcal{K}y=\mathcal{K}A^{\phi-1}y
=\frac{1}{2\pi i}\int_{-\Gamma_{\rho,\theta_{A}}}(A-z)^{-1}(B+z)^{-1}z^{\phi-1}ydz.
\end{gather*}
Since the integral
\begin{gather*}
\int_{-\Gamma_{\rho,\theta_{A}}}A(A-z)^{-1}(B+z)^{-1}z^{\phi-1}ydz
\end{gather*}
converges absolutely, we have that $A^{\phi-1}\mathcal{K}y\in\mathcal{D}(A)$, which implies that $\mathcal{K}y\in\mathcal{D}(A^{\phi})$ (by the properties of the complex powers of an operator, cf. Theorem III.4.6.5 in \cite{Am} ). Thus,
by I.2.9.6 and I.2.5.2 in \cite{Am}, we have that $\mathcal{K}y\in(E,\mathcal{D}(A))_{\phi',q}$ for any $0<\phi'<\phi$ and any $q\in[1,\infty)$. 

\par
Similarly, by (\ref{e2}) we have that
\begin{gather*}
B^{\phi-1}\mathcal{K}y=\frac{1}{2\pi i}\int_{\Gamma_{\rho,\theta_{B}}}(A-z)^{-1}(B+z)^{-1}(-z)^{\phi-1}ydz.
\end{gather*}
Since the integral 
\begin{gather*}
\int_{\Gamma_{\rho,\theta_{B}}}(A-z)^{-1}B(B+z)^{-1}(-z)^{\phi-1}ydz
\end{gather*}
converges absolutely, we obtain that $B^{\phi-1}\mathcal{K}y\in\mathcal{D}(B)$, or that $\mathcal{K}y\in\mathcal{D}(B^{\phi})$. Hence, we find as before that $\mathcal{K}y\in(E,\mathcal{D}(A))_{\phi',q}$.

\par
Let $\{x_{n}\}_{n\in\mathbb{N}}$ be a sequence in $\mathcal{D}(A+B)$ such that $x_{n}\rightarrow 0$ and $(A+B)x_{n}\rightarrow y$ as $n\rightarrow\infty$. There is
\begin{gather*}
x_{n}=(A+B)\mathcal{K}x_{n}=\mathcal{K}(A+B)x_{n}\rightarrow \mathcal{K}y,
\end{gather*}
which implies that $\mathcal{K}y=0$. By the relation
\begin{gather}\label{e474}
A^{-1}B^{-1}w=(A+B)\mathcal{K}A^{-1}B^{-1}w=(A^{-1}+B^{-1})\mathcal{K}w, \,\,\, \forall w\in E,
\end{gather}
we find that $y=0$, or that $A+B$ is closable. That $\overline{{(A+B)}}\mathcal{K}y=y$, for any $y\in E$, follows by the density of $\bigcup_{\theta>0}(E,\mathcal{D}(A))_{\theta,p}\cup(E,\mathcal{D}(B))_{\theta,p}$ in $E$, for any $p\in[1,\infty)$.
\par
For the closedness of the sum of the two operators, let that $\{\tilde{x}_{n}\}_{n\in\mathbb{N}}$ be a sequence in $\mathcal{D}(A+B)$ such that $\tilde{x}_{n}\rightarrow \tilde{x}$ and $(A+B)\tilde{x}_{n}\rightarrow \tilde{y}$ as $n\rightarrow\infty$. By applying $\mathcal{K}$ to the last limit, we find that $\tilde{x}=\mathcal{K}\tilde{y}$. If we show that $\mathcal{K}$ maps to $\mathcal{D}(A+B)$, then since $(A+B)\mathcal{K}=I$ in the dense set $\bigcup_{\theta>0}(E,\mathcal{D}(A))_{\theta,p}\cup(E,\mathcal{D}(B))_{\theta,p}$, we will have that the sum $A+B$ is closed (it follows alternatively by (\ref{e474})). By (\ref{e474}), it is enough to show that $\mathcal{K}$ maps to one of the domains $\mathcal{D}(A)$ or $\mathcal{D}(B)$.
\par
Since $\theta_{A}>\pi/2$, $A$ generates a bounded holomorphic semigroup on $E$, which is defined by 
\begin{gather*}
e^{-wA}=\frac{1}{2\pi i}\int_{\Gamma_{\theta_{A}}}e^{w\lambda}(A+\lambda)^{-1}d\lambda, \,\,\, \mbox{with} \,\,\, |\arg w|\leq\theta_{A}-\frac{\pi}{2}.
\end{gather*}
For any $y\in E$, and $c<0$ sufficiently close to zero, by Fubini's theorem, we have that
\begin{eqnarray*}
\lefteqn{\mathcal{K}e^{-wA}y=(\frac{1}{2\pi i})^{2}\int_{c+\Gamma_{\theta_{B}}}(A-z)^{-1}(B+z)^{-1}(\int_{\Gamma_{\theta_{A}}}e^{w\lambda}(A+\lambda)^{-1}yd\lambda)dz}\\
&=&(\frac{1}{2\pi i})^{2}\int_{c+\Gamma_{\theta_{B}}}\int_{\Gamma_{\theta_{A}}}(B+z)^{-1}e^{w\lambda}(z+\lambda)^{-1}\big((A-z)^{-1}-(A+\lambda)^{-1}\big)yd\lambda dz\\
&=&(\frac{1}{2\pi i})^{2}\int_{c+\Gamma_{\theta_{B}}}\int_{\Gamma_{\theta_{A}}}(A-z)^{-1}(B+z)^{-1}e^{w\lambda}(z+\lambda)^{-1}yd\lambda dz\\
&&-(\frac{1}{2\pi i})^{2}\int_{\Gamma_{\theta_{A}}}\int_{c+\Gamma_{\theta_{B}}}(A+\lambda)^{-1}(B+z)^{-1}e^{w\lambda}(z+\lambda)^{-1}ydzd\lambda.
\end{eqnarray*}
By Cauchy's theorem, the first term in the right hand side of the above equation is zero, and hence
\begin{gather*}
\mathcal{K}e^{-wA}y=\frac{1}{2\pi i}\int_{\Gamma_{\theta_{A}}}(A+\lambda)^{-1}(B-\lambda)^{-1}e^{w\lambda}yd\lambda.
\end{gather*}
Since the integral
\begin{gather*}
\int_{\Gamma_{\theta_{A}}}A(A+\lambda)^{-1}(B-\lambda)^{-1}e^{w\lambda}yd\lambda
\end{gather*}
converges absolutely, $\mathcal{K}e^{-wA}y\in\mathcal{D}(A)$ and 
\begin{eqnarray}\nonumber
\lefteqn{A\mathcal{K}e^{-wA}y=\frac{1}{2\pi i}\int_{\Gamma_{\theta_{A}}}A(A+\lambda)^{-1}(B-\lambda)^{-1}e^{w\lambda}yd\lambda}\\\nonumber
&=&\frac{1}{2\pi i}\int_{\Gamma_{\theta_{A}}}(A+\lambda-\lambda)(A+\lambda)^{-1}(B-\lambda)^{-1}e^{w\lambda}yd\lambda\\\nonumber
&=&\frac{1}{2\pi i}\int_{\Gamma_{\theta_{A}}}(B-\lambda)^{-1}e^{w\lambda}yd\lambda-\frac{1}{2\pi i}\int_{\Gamma_{\theta_{A}}}(A+\lambda)^{-1}(B-\lambda)^{-1}\lambda e^{w\lambda}yd\lambda\\\label{eee}
&=&-\frac{1}{2\pi i}\int_{\Gamma_{\theta_{A}}}(A+\lambda)^{-1}(B-\lambda)^{-1}\lambda e^{w\lambda}yd\lambda,
\end{eqnarray}
where we have used again Cauchy's theorem.
\par
Assume that the operator $A$ admits a bounded $H^{e,\infty}_{\mathcal{L}(E)}(\theta_{A})$-calculus. If we restrict $y\in\mathcal{D}(A)$ in (\ref{eee}) and take the limit $w\rightarrow0$ with $w\in\mathbb{R}$, since $A\mathcal{K}e^{-wA}y=e^{-wA}A\mathcal{K}y$, we find that $\|A\mathcal{K}y\|\leq C \|y\|$ for some $C>0$ depending only on $A$ and $B$. By a Cauchy's sequence argument and the closedness of $A$, we see that $\mathcal{K}$ maps to $\mathcal{D}(A)$.
\end{proof}

\section{The abstract parabolic problem}

Let the operator $B=\partial_{t}$ in $L^{p}(0,T;E)$ with $\mathcal{D}(B)=\{f(t)\in W^{1,p}(0,T;E)\,|\,f(0)=0\}$, for some $p\in(1,\infty)$ and $T>0$. We have that $\sigma(B)=\emptyset$ and 
\begin{gather}\label{e778}
(B+\lambda)^{-1}g=\int_{0}^{t}e^{\lambda(x-t)}g(x)dx, \,\,\, \forall \lambda\in\mathbb{C},
\end{gather} 
where by the Young's inequality for convolution, we infer that
\begin{gather*}
\|(B+\lambda)^{-1}\|\leq\frac{1-e^{-\mathrm{Re}(\lambda) T}}{\mathrm{Re}(\lambda)}, \,\,\, \forall \lambda\in\mathbb{C}.
\end{gather*} 
If we extend $A:\mathcal{D}(A)\rightarrow E$ to $A: L^{p}(0,T;\mathcal{D}(A))\rightarrow L^{p}(0,T;E)$ by $(Af)(t)=Af(t)$, then by Theorem \ref{t1} we get the following result on the maximal $L^{p}$-regularity.

\begin{theorem}
Let $E$ be a Banach space and $A\in\mathcal{P}(\theta_{A})$ with $\theta_{A}>\frac{\pi}{2}$. Then, the following Cauchy problem
\[
f'(t)+Af(t)=g(t),\,\,\,
f(0)=0, \,\,\, \mbox{in} \,\,\, L^{p}(0,T;E), \,\,\, \mbox{with} \,\,\, g\in L^{p}(0,T;E), 
\]
$p>1$ and $T>0$ finite, has a unique solution $f\in \bigcap_{\phi<1}W^{\phi,p}(0,T;E)\cap L^{p}(0,T;(E,\mathcal{D}(A))_{\phi,q})$, for any $q\geq1$, depending continuously on $g$, which is given by
\begin{gather*}
f(t)=\frac{1}{2\pi i}\int_{\Gamma_{\theta_{A}}}\int_{0}^{t}(A+z)^{-1}e^{z(t-x)}g(x)dxdz, \,\,\,\,\,\, \forall t\in [0,T].
\end{gather*}
If  $g\in \bigcup_{\phi>0}W^{\phi,p}(0,T;E)\cup L^{p}(0,T;(E,\mathcal{D}(A))_{\phi,q})$ for some $q\geq1$, then $f\in W^{1,p}(0,T;E)\cap L^{p}(0,T;\mathcal{D}(A))$. Moreover, if $A$ admits a bounded $H^{e,\infty}_{\mathcal{L}(L^{p}(0,T;E))}(\theta_{A})$-calculus, then $f\in W^{1,p}(0,T;E)\cap L^{p}(0,T;\mathcal{D}(A))$ for any $g\in L^{p}(0,T;E)$.
\end{theorem}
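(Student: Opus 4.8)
The plan is to deduce the theorem from Theorem \ref{t1}, applied to $B=\partial_t$ and to $A$ extended pointwise to $L^p(0,T;E)$ by $(Af)(t)=Af(t)$ with domain $L^p(0,T;\mathcal{D}(A))$. First I would check the hypotheses of Theorem \ref{t1}. Since $\sigma(B)=\emptyset$ we have $\rho(-B)=\mathbb{C}\supset S_{\theta_B}$, and for $z\in S_{\theta_B}$ with $\theta_B<\pi/2$ one has $\mathrm{Re}(z)\geq|z|\cos\theta_B>0$, so the Young-inequality bound recorded after (\ref{e778}) gives $(1+|z|)\|(B+z)^{-1}\|\leq\max\{2T,\,2/\cos\theta_B\}$; hence $B\in\mathcal{P}(\theta_B)$ for every $\theta_B\in(0,\pi/2)$. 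The extended $A$ has resolvents acting as pointwise multipliers, so $\|(A+z)^{-1}\|_{\mathcal{L}(L^p(0,T;E))}=\|(A+z)^{-1}\|_{\mathcal{L}(E)}$ and $A\in\mathcal{P}(\theta_A)$ on $L^p(0,T;E)$ with the same constant; and since $(A+\mu)^{-1}$ acts pointwise while $(B+\lambda)^{-1}$ acts by the convolution (\ref{e778}), the two resolvents commute, so $A$ and $B$ are resolvent commuting. Finally, $\theta_A>\pi/2$ makes the interval $(\pi-\theta_A,\pi/2)$ nonempty, and any $\theta_B$ in it satisfies $\theta_A>\theta_B$ and $\theta_A+\theta_B>\pi$, so Theorem \ref{t1} applies.

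Theorem \ref{t1} then provides the bounded operator $\mathcal{K}$ of (\ref{K}) on $L^p(0,T;E)$, with $\mathcal{K}g$ the unique solution of $\overline{(A+B)}f=g$; on $\mathcal{D}(A+B)=L^p(0,T;\mathcal{D}(A))\cap\{f\in W^{1,p}(0,T;E):f(0)=0\}$ this equation is exactly $f'+Af=g$, $f(0)=0$. To obtain the displayed integral I would specialize $\mathcal{K}$: starting from $\mathcal{K}g=\frac{1}{2\pi i}\int_{\Gamma_{\theta_B}}(A-z)^{-1}(B+z)^{-1}g\,dz$, substitute $z\mapsto-z$ and rotate the contour to $\Gamma_{\theta_A}$ — legitimate since $(B-z)^{-1}$ is entire, $(A+z)^{-1}$ is holomorphic on $|\arg z|\leq\theta_A$ which contains the region swept (as $\pi-\theta_B<\theta_A$), the integrand is $O(|z|^{-2})$ there (from $\|(A+z)^{-1}\|\leq K/|z|$ and, for $\mathrm{Re}(z)<0$, $\|(B-z)^{-1}\|\leq(1-e^{\mathrm{Re}(z)T})/|\mathrm{Re}(z)|$), and it is bounded at the origin through which both contours pass; this is the deformation already carried out, up to the regularizing factor $e^{w\lambda}$, in the proof of Theorem \ref{t1}. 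Inserting $(B-z)^{-1}g(t)=\int_0^t e^{z(t-x)}g(x)\,dx$, which is (\ref{e778}) with $\lambda=-z$, gives the stated formula.

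For the three regularity assertions I would push the three conclusions of Theorem \ref{t1} through the standard real-interpolation identities over $(0,T)$ (cf. \cite{Am}). For the first assertion, Theorem \ref{t1} with fine index $p$ gives, for every $\phi'<1$, $\mathcal{K}g\in(L^p(0,T;E),L^p(0,T;\mathcal{D}(A)))_{\phi',p}\cap(L^p(0,T;E),\mathcal{D}(\partial_t))_{\phi',p}$; using $(L^p(0,T;E),L^p(0,T;\mathcal{D}(A)))_{\phi',p}=L^p(0,T;(E,\mathcal{D}(A))_{\phi',p})$, the embedding $(L^p(0,T;E),\mathcal{D}(\partial_t))_{\phi',p}\hookrightarrow W^{\phi',p}(0,T;E)$, and the embeddings $(X_0,X_1)_{\phi',p}\hookrightarrow(X_0,X_1)_{\phi,q}$ (valid for $\phi<\phi'$, absorbing the fine index so that $q\geq1$ is arbitrary) and $W^{\phi',p}\hookrightarrow W^{\phi,p}$, one gets $\mathcal{K}g$ in $\bigcap_{\phi<1}W^{\phi,p}(0,T;E)\cap L^p(0,T;(E,\mathcal{D}(A))_{\phi,q})$ for every $q\geq1$; continuous dependence is boundedness of $\mathcal{K}$ into these spaces. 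For the second assertion, if $g\in W^{\phi_0,p}(0,T;E)$ with $\phi_0>0$ then, taking $\psi=\min\{\phi_0,1/(2p)\}<1/p$, $g\in W^{\psi,p}(0,T;E)={}_0W^{\psi,p}(0,T;E)=(L^p(0,T;E),\mathcal{D}(\partial_t))_{\psi,p}$; and if $g\in L^p(0,T;(E,\mathcal{D}(A))_{\phi_0,q})$ then $g\in(L^p(0,T;E),L^p(0,T;\mathcal{D}(A)))_{\phi_1,p}$ for any $\phi_1<\phi_0$; in either case the corresponding part of Theorem \ref{t1} gives $\mathcal{K}g\in\mathcal{D}(A)\cap\mathcal{D}(B)=W^{1,p}(0,T;E)\cap L^p(0,T;\mathcal{D}(A))$ (with $\mathcal{K}g(0)=0$). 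The third assertion is immediate: a bounded $H^{e,\infty}_{\mathcal{L}(L^p(0,T;E))}(\theta_A)$-calculus for the extended $A$ is exactly the extra hypothesis of Theorem \ref{t1}, whence $\overline{(A+B)}=A+B$ and $\mathcal{K}g\in\mathcal{D}(A+B)=W^{1,p}(0,T;E)\cap L^p(0,T;\mathcal{D}(A))$ for all $g\in L^p(0,T;E)$.

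The analytic substance is entirely in Theorem \ref{t1}; the work here is bookkeeping, and the only delicate point is the interpolation identities over $(0,T)$. One must use the fine index $q=p$ in the identity $(L^p(X_0),L^p(X_1))_{\phi,p}=L^p((X_0,X_1)_{\phi,p})$ — it fails for $q\neq p$ — and recover arbitrary $q$ only through the one-sided embedding at a lower smoothness $\phi<\phi'$; and one must keep track of the condition $f(0)=0$ built into $\mathcal{D}(\partial_t)$, so that $(L^p(0,T;E),\mathcal{D}(\partial_t))_{\phi,p}={}_0W^{\phi,p}(0,T;E)$, which coincides with $W^{\phi,p}(0,T;E)$ only for $\phi\leq1/p$ (hence the small $\psi$ in the second assertion). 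The contour rotation in the second paragraph is routine once the left-half-plane decay of $(B-z)^{-1}$ is noted.
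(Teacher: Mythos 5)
Your proposal is correct and follows exactly the route the paper intends: set $B=\partial_t$ with the resolvent formula (\ref{e778}) and the Young-inequality bound, extend $A$ pointwise to $L^p(0,T;E)$, and invoke Theorem \ref{t1}, whose solution formula is converted to the stated one by the contour deformation already implicit in that proof. The paper states this deduction in one line, so your verifications of the hypotheses (sectoriality of $B$ on $S_{\theta_B}$ for $\theta_B<\pi/2$, the angle choice $\theta_B\in(\pi-\theta_A,\pi/2)$, resolvent commutation) and the careful interpolation bookkeeping ($(L^p(X_0),L^p(X_1))_{\theta,p}=L^p((X_0,X_1)_{\theta,p})$ only for fine index $p$, and the trace condition in $(L^p,\mathcal{D}(\partial_t))_{\psi,p}$ handled via $\psi<1/p$) are exactly the details the paper leaves implicit.
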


By the Riemann-Lebesgue lemma (see section III.4.2 in \cite{Am}), the Fourier transform of a function in $L^{1}(\mathbb{R};E)$ vanish at infinity. Thus, if $\mathrm{Re}(\lambda)\geq k$ for some $k\in\mathbb{R}$, then
\begin{gather}\label{e234}
\lim_{\lambda\rightarrow\infty}\|(B+\lambda)^{-1}g\|=0, \,\,\, \forall g\in L^{p}(0,T;E).
\end{gather}
Also, by integration by parts, we find the relation
\begin{gather*}
\lambda(B+\lambda)^{-1}g(t)=g(t)-(B+\lambda)^{-1}Bg(t), \,\,\,  \forall g\in W^{1,p}(0,T;E), \,\,\, g(0)=0.
\end{gather*}
Hence, for  $\mathrm{Re}(\lambda)\geq k$, $k\in\mathbb{R}$, we have that
\begin{gather}\label{e233}
\lim_{\lambda\rightarrow\infty}|\lambda|\|(B+\lambda)^{-1}g\|<\infty, \,\,\, \forall g\in W^{1,p}(0,T;E), \,\,\, g(0)=0.
\end{gather}

\section{The abstract hyperbolic problem}

\begin{definition}\label{dp}
Let $E$ be a Banach space and $c>0$. Let $\mathcal{Q}(c)$ be the class of closed linear operators in $E$ such that if $A\in\mathcal{Q}(c)$, then $A\in\mathcal{P}(0)$,
\[
\Pi_{c}=\{z\in\mathbb{C}\,|\, \mathrm{Re}(z)\geq c-\frac{(\mathrm{Im}(z))^{2}}{4c}\}\subset\rho{(-A)}
\]
and
\[
\|(A+z)^{-1}\|=o(1), \,\,\, |z|^{\frac{1}{2}}\|(A+z)^{-1}A^{-\frac{1}{2}}\|=o(1) \,\,\,\,\,\, \mbox{in} \,\,\,\,\,\, \Pi_{c}.
\]
\end{definition}

In the following we denote $W_{0}^{k,p}(0,T;\mathcal{D}(A^{\alpha}))=B^{-k}A^{-\alpha}L^{p}(0,T;E)$, $k\in\mathbb{N}$, $\alpha\geq0$, where $B^{-1}$ is defined in (\ref{e778}). For the above class of operators, we have the following.

\begin{theorem}\label{t2}
Let $E$ be a Banach space and $A\in\mathcal{Q}(c^{2})$ for some $c>0$. Then, the following Cauchy problem
\begin{gather*}
f''(t)+Af(t)=g(t), \,\,\, f(0)=f'(0)=0 \,\,\, \mbox{in} \,\,\, L^{p}(0,T;E), \\
\mbox{with}\\
g\in L^{p}(0,T;\mathcal{D}(A^{\frac{3}{2}}))\cap W_{0}^{2,p}(0,T;\mathcal{D}(\sqrt{A})),
\end{gather*}
$p>1$ and $T>0$ finite, has (in the sense of (\ref{satyrdark}) and (\ref{Qdark})) a unique solution $f\in W^{1,p}(0,T;E)$ given by
\begin{gather*}
f(t)=\frac{1}{2\pi i}\int_{i\mathbb{R}-c}\int_{0}^{t}(A+z^{2})^{-1}e^{z(x-t)}g(x)dxdz, \,\,\, \forall t\in[0,T].
\end{gather*}
\end{theorem}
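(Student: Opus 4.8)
The plan is to show that the operator $\mathcal{K}$ given on the data space by
\[
\mathcal{K}g(t)=\frac{1}{2\pi i}\int_{i\mathbb{R}-c}(A+z^{2})^{-1}\Big(\int_{0}^{t}e^{z(x-t)}g(x)\,dx\Big)\,dz
\]
maps into $W^{1,p}(0,T;E)$ and furnishes the solution, in the spirit of the treatment of $\mathcal{K}$ in the proof of Theorem~\ref{t1}. Writing $B=\partial_{t}$ with the domain of Section~3, recall from (\ref{e778}) that $\int_{0}^{t}e^{z(x-t)}g(x)\,dx=(B+z)^{-1}g(t)$ for every $z\in\mathbb{C}$, so $f:=\mathcal{K}g=\frac{1}{2\pi i}\int_{i\mathbb{R}-c}(A+z^{2})^{-1}(B+z)^{-1}g\,dz$. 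The geometric fact behind everything is that $z\mapsto z^{2}$ sends $\{\mathrm{Re}(z)=-c\}$ exactly onto $\partial\Pi_{c^{2}}$ and the half-plane $\{\mathrm{Re}(z)\le-c\}$ into $\Pi_{c^{2}}$; indeed, for $z=-a+is$ with $a\ge c$ one has $\mathrm{Re}(z^{2})=a^{2}-s^{2}$, $\mathrm{Im}(z^{2})=-2as$, whence $\mathrm{Re}(z^{2})-c^{2}+\frac{(\mathrm{Im}(z^{2}))^{2}}{4c^{2}}=(a^{2}-c^{2})\big(1+\tfrac{s^{2}}{c^{2}}\big)\ge0$. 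Thus $(A+z^{2})^{-1}$ is well defined on and to the left of the contour, Cauchy's theorem applies for deformations inside $\{\mathrm{Re}(z)\le-c\}$, and the hypotheses $A\in\mathcal{Q}(c^{2})$ read, along the contour, $\|(A+z^{2})^{-1}\|=o(1)$ and $|z|\,\|(A+z^{2})^{-1}A^{-1/2}\|=o(1)$ as $|z|\to\infty$.

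Second, and this is where the two regularity hypotheses on $g$ are used, I would prove that the integral and its formal $t$-derivative converge absolutely in $E$, uniformly for $t\in[0,T]$. From $g\in W_{0}^{2,p}$ we have $g(0)=g'(0)=0$, so integration by parts in (\ref{e778}) gives $(B+z)^{-1}g=z^{-1}g-z^{-2}Bg+z^{-2}(B+z)^{-1}B^{2}g$, and for $\mathrm{Re}(z)=-c$ the bound on $(B+z)^{-1}$ following (\ref{e778}) keeps $\|(B+z)^{-1}\|$ bounded. Combining this with $g\in L^{p}(0,T;\mathcal{D}(A^{3/2}))$, $Bg,B^{2}g\in L^{p}(0,T;\mathcal{D}(\sqrt{A}))$ and the bounds $\|(A+z^{2})^{-1}A^{-1/2}\|=o(|z|^{-1})$ and $\|(A+z^{2})^{-1}\|=o(1)$, each summand of $(A+z^{2})^{-1}(B+z)^{-1}g$ is $o(|z|^{-2})$, so the integral converges. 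Differentiating under the integral, $\partial_{t}(B+z)^{-1}g=g-z(B+z)^{-1}g$, and inserting the expansion of $z(B+z)^{-1}g$ yields the same decay; hence $f\in C^{1}([0,T];E)\subset W^{1,p}(0,T;E)$, and the formula gives $f(0)=0$ and $f'(0)=\frac{1}{2\pi i}\int_{i\mathbb{R}-c}(A+z^{2})^{-1}g(0)\,dz=0$.

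Third I would verify that $f$ solves the Cauchy problem in the sense of (\ref{satyrdark})--(\ref{Qdark}), i.e. the integrated form obtained by applying suitable negative powers of $A$ and $B$ so that all integrals converge. The identity $B^{2}+A=(A+z^{2})+(B-z)(B+z)$ and resolvent commutativity give
\[
(B^{2}+A)(A+z^{2})^{-1}(B+z)^{-1}g=(B+z)^{-1}g+(A+z^{2})^{-1}(B-z)g,
\]
so after applying the relevant negative powers and integrating over $i\mathbb{R}-c$, the right-hand side must be identified. The first term is handled by Laplace--Fourier inversion: for fixed $t$, $z\mapsto(B+z)^{-1}g(t)=\int_{0}^{t}e^{-z(t-x)}g(x)\,dx$ is entire, is the Laplace transform of $g(t-\cdot)\mathbf{1}_{(0,t)}$, and carries the jump of that function at $0$, so $\frac{1}{2\pi i}\int_{i\mathbb{R}-c}(B+z)^{-1}g\,dz=\tfrac12 g$; the second term, treated by deforming its contour into $\Pi_{c^{2}}$ (legitimate by the first step), using the $\mathcal{Q}(c^{2})$ decay and a residue computation, contributes the complementary $\tfrac12 g$, so the two sum to $g$. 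For uniqueness, if $f\in W^{1,p}(0,T;E)$ satisfies the homogeneous defining relations, then applying $\mathcal{K}$ together with the commuting-resolvent identity in the style of (\ref{e474}) — here relating $A^{-1}B^{-2}$, $\mathcal{K}$ and $(A^{-1}+B^{-2})$ — forces $f=0$, using that $A^{-1}$ and the Volterra operator $B^{-2}$ are injective (equivalently, via the associated first-order system in $(f,f')$).

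I expect the main obstacle to be the convergence bookkeeping in the second step together with the identification of the two contour integrals in the third: the class $\mathcal{Q}(c^{2})$ and the two smoothness hypotheses on $g$ are tailored precisely so that every piece of the integrand is $o(|z|^{-2})$, and one must track carefully which power of $A$ and which integration-by-parts remainder goes with each term (including in the $f'$ computation), while making rigorous both the deformation across the parabola $\partial\Pi_{c^{2}}$ and the evaluation of the $(A+z^{2})^{-1}(B-z)g$-integral. This delicacy is unavoidable — neither $B^{2}f$ nor $Af$ converges termwise, which is exactly why the equation can only be asserted in the weak sense of (\ref{satyrdark})--(\ref{Qdark}).
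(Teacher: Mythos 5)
Your first two steps are essentially sound and agree with the paper: the computation showing that $z\mapsto z^{2}$ maps $\{\mathrm{Re}(z)\le-c\}$ into $\Pi_{c^{2}}$ is exactly the paper's starting observation, and the bookkeeping using $g\in L^{p}(0,T;\mathcal{D}(A^{3/2}))\cap W_{0}^{2,p}(0,T;\mathcal{D}(\sqrt{A}))$ to make $f$ and its $t$-derivative absolutely convergent is in the right spirit. The genuine gap is in your third step, which is the heart of the theorem. The solution is asserted only ``in the sense of (\ref{satyrdark}) and (\ref{Qdark})'', and these identities are not the ``apply negative powers of $A$ and $B$'' regularization you guessed: the paper factorizes $B^{2}+A=(i\sqrt{A}+B)(-i\sqrt{A}+B)$, introduces the resolvents of $\pm i\sqrt{A}$ via $(\pm i\sqrt{A}+z)^{-1}=z(A+z^{2})^{-1}\mp i\sqrt{A}(A+z^{2})^{-1}$ so that $\|(\pm i\sqrt{A}+z)^{-1}A^{-1/2}\|=o(1)$ on $\{\mathrm{Re}(z)\le-c\}$ (this is (\ref{satyr})), and then works with the two-parameter family $y_{\lambda,z}$ and absolutely convergent double contour integrals over $i\mathbb{R}-c'$ and $i\mathbb{R}-c$ with $c'>c$: the four identities (\ref{Q1})--(\ref{Q4}) sum to (\ref{satyrdark}), and (\ref{Qdark}) identifies the single-integral formula with the double one. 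None of this mechanism appears in your plan, and without it (or a fully justified substitute) your verification does not close.

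Concretely, your splitting $(B^{2}+A)(A+z^{2})^{-1}(B+z)^{-1}g=(B+z)^{-1}g+(A+z^{2})^{-1}(B-z)g$ produces two single contour integrals, neither of which converges absolutely under the hypotheses: $(B+z)^{-1}g$ decays only by Riemann--Lebesgue, and $z(A+z^{2})^{-1}g$ is merely $o(1)$ even for $g\in\mathcal{D}(A^{3/2})$, since $\mathcal{Q}(c^{2})$ gives nothing better than $|z|\,\|(A+z^{2})^{-1}A^{-1/2}\|=o(1)$. Your evaluation of the first integral as $\tfrac{1}{2}g$ is a principal-value statement for which no regularization is set up, and your account of the second is wrong in mechanism: there is no residue to pick up by deforming into $\{\mathrm{Re}(z)\le-c\}$, because that half-plane is mapped by $z\mapsto z^{2}$ into the resolvent region $\Pi_{c^{2}}$, while deformation to the right is blocked since for $z$ near $i\mathbb{R}$ the point $z^{2}$ lies near the negative real axis where $-A$ may have spectrum; in a scalar model the complementary $\tfrac{1}{2}g$ arises from the principal value at infinity of the term $-z(A+z^{2})^{-1}g$, not from a residue. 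Making these principal values and interchanges rigorous in the operator setting is precisely what the $\pm i\sqrt{A}$ factorization and the auxiliary contour $i\mathbb{R}-c'$ accomplish in the paper; without that, your step three --- and with it the proof that the displayed $f$ solves the equation, as well as the uniqueness claim, which in the paper is built into the sense (\ref{satyrdark})--(\ref{Qdark}) rather than proved by an analogue of (\ref{e474}) --- remains heuristic.
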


\begin{proof}
If $B$ is the operator from the previous section, then the problem becomes 
\begin{gather*}
(B^{2}+A)f=g.
\end{gather*}
By the relation
\begin{gather*}
(\pm i\sqrt{A}+z)^{-1}=z(A+z^{2})^{-1}\mp i\sqrt{A}(A+z^{2})^{-1},
\end{gather*}
we see that $(\pm i\sqrt{A}+z)^{-1}$ is defined on the area $\{z\in\mathbb{C}\,|\,\mathrm{Re}(z)\leq -c\}$, since the last is mapped by the $z\rightarrow z^{2}$ to $\Pi_{c^{2}}$. Also, by the equation
\begin{gather*}
(\pm i\sqrt{A}+z)^{-1}A^{-\frac{1}{2}}=z(A+z^{2})^{-1}A^{-\frac{1}{2}}\mp i(A+z^{2})^{-1},
\end{gather*}
we have that 
\begin{gather}\label{satyr}
\|(\pm i\sqrt{A}+z)^{-1}A^{-\frac{1}{2}}\|=o(1) \,\,\,\,\,\, \mbox{in} \,\,\,\,\,\, \{z\in\mathbb{C}\,|\,\mathrm{Re}(z)\leq -c\}.
\end{gather}
Let $c'>c>0$ and $g\in L^{p}(0,T;\mathcal{D}(A^{\frac{3}{2}}))\cap W_{0}^{2,p}(0,T;\mathcal{D}(\sqrt{A}))$. By (\ref{e234}), (\ref{satyr}) and Cauchy's theorem we find that
\begin{eqnarray}\nonumber
\lefteqn{-\frac{i}{2}B^{-1}\sqrt{A}g+\frac{1}{2}g}\\\nonumber
&=&-\frac{1}{2\pi i}\int_{i\mathbb{R}-c'}(B+z)^{-1}(-i\sqrt{A}+B)g\frac{1}{2z}dz-\frac{1}{2\pi i}\int_{i\mathbb{R}-c'}(i\sqrt{A}-z)^{-1}A^{-\frac{1}{2}}A^{\frac{1}{2}}(-i\sqrt{A}+B)g\frac{1}{2z}dz\\\nonumber
&=&\frac{1}{(2\pi i)^{2}}\int_{i\mathbb{R}-c'}\int_{i\mathbb{R}-c}(B+z)^{-1}(z+\lambda)^{-1}(z-\lambda)^{-1}(-i\sqrt{A}+B)gd\lambda dz\\\nonumber
&&+\frac{1}{(2\pi i)^{2}}\int_{i\mathbb{R}-c'}\int_{i\mathbb{R}-c}(i\sqrt{A}-z)^{-1}(z+\lambda)^{-1}(z-\lambda)^{-1}(-i\sqrt{A}+B)gd\lambda dz\\\nonumber
&=&\frac{1}{(2\pi i)^{2}}\int_{i\mathbb{R}-c'}\int_{i\mathbb{R}-c}(i\sqrt{A}-z)^{-1}(B+z)^{-1}(i\sqrt{A}+B)(z+\lambda)^{-1}(z-\lambda)^{-1}(-i\sqrt{A}+B)gd\lambda dz\\\nonumber
&=&\frac{1}{(2\pi i)^{2}}\int_{i\mathbb{R}-c'}\int_{i\mathbb{R}-c}(i\sqrt{A}-z)^{-1}(B+z)^{-1}(z+\lambda)^{-1}(z-\lambda)^{-1}(B^{2}+A)gd\lambda dz,\\\label{Q1}
\end{eqnarray}
\begin{eqnarray}\nonumber
\lefteqn{0=\frac{1}{2\pi i}\int_{i\mathbb{R}-c'}(i\sqrt{A}-z)^{-1}A^{-\frac{1}{2}}(B-z)^{-1}A^{\frac{1}{2}}(B^{2}+A)g\frac{1}{2z}dz}\\\nonumber
&=&-\frac{1}{(2\pi i)^{2}}\int_{i\mathbb{R}-c'}\int_{i\mathbb{R}-c}(i\sqrt{A}-z)^{-1}(B+\lambda)^{-1}(z+\lambda)^{-1}(z-\lambda)^{-1}(B^{2}+A)gd\lambda dz,\\\label{Q2}
\end{eqnarray}
\begin{eqnarray}\nonumber
\lefteqn{\frac{i}{2}B^{-1}\sqrt{A}g+\frac{1}{2}g}\\\nonumber
&=&-\frac{1}{2\pi i}\int_{i\mathbb{R}-c'}(B+z)^{-1}(i\sqrt{A}+B)g\frac{1}{2z} dz+\frac{1}{2\pi i}\int_{i\mathbb{R}-c'}(i\sqrt{A}+z)^{-1}A^{-\frac{1}{2}}A^{\frac{1}{2}}(i\sqrt{A}+B)g\frac{1}{2z} dz\\\nonumber
&=&-\frac{1}{2\pi i}\int_{i\mathbb{R}-c'}(-i\sqrt{A}-z)^{-1}(B+z)^{-1}(-i\sqrt{A}+B)(i\sqrt{A}+B)g\frac{1}{2z} dz\\\nonumber
&=&-\frac{1}{(2\pi i)^{2}}\int_{i\mathbb{R}-c'}\int_{i\mathbb{R}-c}(i\sqrt{A}+\lambda)^{-1}A^{-\frac{1}{2}}(B+z)^{-1}(z+\lambda)^{-1}(z-\lambda)^{-1}A^{\frac{1}{2}}(B^{2}+A)gd\lambda dz\\\label{Q3}
\end{eqnarray}
and
\begin{eqnarray}\nonumber
\lefteqn{0=\frac{1}{2\pi i}\int_{i\mathbb{R}-c'}(B-z)^{-1}(i\sqrt{A}+B)g\frac{1}{2z}dz+\frac{1}{2\pi i}\int_{i\mathbb{R}-c'}(-i\sqrt{A}-z)^{-1}A^{-\frac{1}{2}}A^{\frac{1}{2}}(i\sqrt{A}+B)g\frac{1}{2z}dz}\\\nonumber
&=&-\frac{1}{(2\pi i)^{2}}\int_{i\mathbb{R}-c'}\int_{i\mathbb{R}-c}(B+\lambda)^{-1}(z+\lambda)^{-1}(z-\lambda)^{-1}(i\sqrt{A}+B)gd\lambda dz\\\nonumber
&&-\frac{1}{(2\pi i)^{2}}\int_{i\mathbb{R}-c'}\int_{i\mathbb{R}-c}(-i\sqrt{A}-\lambda)^{-1}A^{-\frac{1}{2}}(z+\lambda)^{-1}(z-\lambda)^{-1}A^{\frac{1}{2}}(i\sqrt{A}+B)gd\lambda dz\\\nonumber
&=&-\frac{1}{(2\pi i)^{2}}\int_{i\mathbb{R}-c'}\int_{i\mathbb{R}-c}(-i\sqrt{A}-\lambda)^{-1}(B+\lambda)^{-1}(-i\sqrt{A}+B)(z+\lambda)^{-1}(z-\lambda)^{-1}(i\sqrt{A}+B)gd\lambda dz\\\nonumber
&=&\frac{1}{(2\pi i)^{2}}\int_{i\mathbb{R}-c'}\int_{i\mathbb{R}-c}(i\sqrt{A}+\lambda)^{-1}(B+\lambda)^{-1}(z+\lambda)^{-1}(z-\lambda)^{-1}(B^{2}+A)gd\lambda dz.\\\label{Q4}
\end{eqnarray}
\par
Let the family $y_{\lambda,z}(h)\in L^{p}(0,T;E)$, defined for $\lambda\in i\mathbb{R}-c$, $z\in i\mathbb{R}-c'$ and for any $h\in L^{p}(0,T;E)$ by
\begin{gather*}
y_{\lambda,z}(h)=(i\sqrt{A}-z)^{-1}(-i\sqrt{A}-\lambda)^{-1}(B+z)^{-1}(B+\lambda)^{-1}h.
\end{gather*}
There is
\begin{eqnarray*}
\lefteqn{y_{\lambda,z}(h)=[(i\sqrt{A}-z)^{-1}+(-i\sqrt{A}-\lambda)^{-1}](z+\lambda)^{-1}[(B+z)^{-1}-(B+\lambda)^{-1}](z-\lambda)^{-1}h}\\
&&=(i\sqrt{A}-z)^{-1}(B+z)^{-1}(z+\lambda)^{-1}(z-\lambda)^{-1}h-(i\sqrt{A}-z)^{-1}(B+\lambda)^{-1}(z+\lambda)^{-1}(z-\lambda)^{-1}h\\
&&-(i\sqrt{A}+\lambda)^{-1}(B+z)^{-1}(z+\lambda)^{-1}(z-\lambda)^{-1}h+(i\sqrt{A}+\lambda)^{-1}(B+\lambda)^{-1}(z+\lambda)^{-1}(z-\lambda)^{-1}h.
\end{eqnarray*} 
By (\ref{Q1}), (\ref{Q2}), (\ref{Q3}) and (\ref{Q4}), we have that
\begin{gather}\label{satyrdark}
\frac{1}{(2\pi i)^{2}}\int_{i\mathbb{R}-c'}\int_{i\mathbb{R}-c}y_{\lambda,z}((B^{2}+A)g)d\lambda dz=g.
\end{gather}
For any $w\in W_{0}^{1,p}(0,T;\mathcal{D}(\sqrt{A}))$, by (\ref{e233}), (\ref{satyr}) and Cauchy's theorem we have that
\begin{eqnarray}\nonumber
\lefteqn{\frac{1}{2\pi i}\int_{i\mathbb{R}-c}(A+z^{2})^{-1}(B+z)^{-1}wdz}\\\nonumber
&=&\frac{1}{2\pi i}\int_{i\mathbb{R}-c'}\big((i\sqrt{A}+z)^{-1}-(i\sqrt{A}-z)^{-1}\big)(B+z)^{-1}\frac{1}{2z}wdz\\\nonumber
&=&-\frac{1}{2\pi i}\int_{i\mathbb{R}-c'}(i\sqrt{A}-z)^{-1}(B+z)^{-1}\frac{1}{2z}wdz+\frac{1}{2\pi i}\int_{i\mathbb{R}-c'}(i\sqrt{A}-z)^{-1}(B-z)^{-1}\frac{1}{2z}wdz\\\nonumber
&&+\frac{1}{2\pi i}\int_{i\mathbb{R}-c'}(i\sqrt{A}+z)^{-1}(B+z)^{-1}\frac{1}{2z}wdz\\\nonumber
&=&\frac{1}{(2\pi i)^{2}}\int_{i\mathbb{R}-c'}\int_{i\mathbb{R}-c}(i\sqrt{A}-z)^{-1}(B+z)^{-1}(z+\lambda)^{-1}(z-\lambda)^{-1}wd\lambda dz\\\nonumber
&&-\frac{1}{(2\pi i)^{2}}\int_{i\mathbb{R}-c'}\int_{i\mathbb{R}-c}(i\sqrt{A}-z)^{-1}(B+\lambda)^{-1}(z+\lambda)^{-1}(z-\lambda)^{-1}wd\lambda dz\\\nonumber
&&-\frac{1}{(2\pi i)^{2}}\int_{i\mathbb{R}-c'}\int_{i\mathbb{R}-c}(i\sqrt{A}+\lambda)^{-1}(B+z)^{-1}(z+\lambda)^{-1}(z-\lambda)^{-1}wd\lambda dz\\\nonumber
&&+\frac{1}{(2\pi i)^{2}}\int_{i\mathbb{R}-c'}\int_{i\mathbb{R}-c}(i\sqrt{A}+\lambda)^{-1}(B+\lambda)^{-1}(z+\lambda)^{-1}(z-\lambda)^{-1}wd\lambda dz\\\label{Qdark}
&=&\frac{1}{(2\pi i)^{2}}\int_{i\mathbb{R}-c'}\int_{i\mathbb{R}-c}y_{\lambda,z}(w)d\lambda dz.
\end{eqnarray}
Hence, by (\ref{satyrdark}) we find the following solution to the problem
\begin{gather*}
\frac{1}{2\pi i}\int_{i\mathbb{R}-c}(A+z^{2})^{-1}(B+z)^{-1}gdz,
\end{gather*}
and the final expression follows by (\ref{e778}). 
\end{proof}
 
\begin{example}
Let $\mathcal{H}$ be a Hilbert space and $A$ a strictly positive self-adjoint operator on $\mathcal{H}$. Then, by the spectral theorem, we can easily see that $A\in\mathcal{Q}(c)$, where $c$ is the lower bound of the spectrum. Hence, we can apply the previous theorem to get a solution to the second order problem for suitable $g$. 
\end{example}

\section{Appendix}

The following well known argument can also be found in section III.4.6 in \cite{Am}.\\

\textbf{\textit{Sectoriality extension argument:} }
Let $\Omega\subset\mathbb{C}$ be closed and connected, and $A$ be a closed linear operator in a Banach space $E$ such that $\Omega\subset\rho(-A)$ and $(1+|\lambda|)\|(A+\lambda)^{-1}\|\leq K$, for any $\lambda\in\Omega$ and some $K\geq1$. Let the set 
\begin{gather*}
\Omega'=\cup_{\lambda\in\Omega}\{z\in\mathbb{C}\,|\,|z-\lambda|\leq(1+|\lambda|)/2K\}.
\end{gather*}
Then, by the relation
\begin{gather*}
A+z=(A+\lambda)\big(I+(z-\lambda)(A+\lambda)^{-1}\big),
\end{gather*}
we have that $(A+z)^{-1}$ is defined in $\Omega'$ and  
\begin{eqnarray*}
\lefteqn{\|(A+z)^{-1}\|\leq \|\big(I+(z-\lambda)(A+\lambda)^{-1}\big)^{-1}\|\|(A+\lambda)^{-1}\|\leq\frac{2K}{1+|\lambda|}}\\
&&\leq \frac{2K(1+|\lambda|+|z-\lambda|)}{(1+|\lambda|)(1+|z|)}\leq \frac{2K}{(1+|z|)}(1+\frac{1}{2K})=\frac{2K+1}{1+|z|}.
\end{eqnarray*}


\begin{thebibliography}{99}

\bibitem{Am} H. Amann, {\em Linear and quasilinear parabolic problems}. Monographs in Mathematics Vol. 89, Birkh\"auser Verlag (1995).

\bibitem{CL} P. Clément and S. Li, {\em Abstract parabolic quasilinear equations and application to a groundwater flow problem}. Adv. Math. Sci. Appl. 3, Special Issue, 17--32 (1993/94).

\bibitem{DV} G. Dore and A. Venni, {\em On the closedness of the sum of two closed operators}. Math. Z. 196, no. 2, 189--201(1987).

\bibitem{KW} N. Kalton and L. Weis, {\em The $H^{\infty}$-calculus and sums of closed operators}. Math. Ann. 321, no. 2, 319--345 (2001).

\bibitem{LA} F. Lancien, G. Lancien and C. Le Merdy, {\em A joint functional calculus for sectorial operators with commuting resolvents}. Proc. London Math. Soc. (3) 77, no. 2, 387--414 (1998).

\bibitem{NZ} H. Neidhardt and V. Zagrebnov, {\em Linear non-autonomous Cauchy problems and evolution semigroups}. Adv. Differential Equations 14, no. 3-4, 289--340 (2009).  

\bibitem{PG} G. Da Prato and P. Grisvard, {\em Sommes d'opérateurs linéaires et équations différentielles opérationnelles}. J. Math. Pures Appl. (9) 54, no. 3, 305--387 (1975).

\end{thebibliography}
\end{document}